\begin{document}
\newtheorem{theorem}{Theorem}[section]
\newtheorem{lemma}[theorem]{Lemma}
\newtheorem{definition}[theorem]{Definition}
\newtheorem{conjecture}[theorem]{Conjecture}
\newtheorem{proposition}[theorem]{Proposition}
\newtheorem{claim}[theorem]{Claim}
\newtheorem{algorithm}[theorem]{Algorithm}
\newtheorem{corollary}[theorem]{Corollary}
\newtheorem{observation}[theorem]{Observation}
\newtheorem{problem}[theorem]{Open Problem}
\newcommand{\noin}{\noindent}
\newcommand{\ind}{\indent}
\newcommand{\om}{\omega}
\newcommand{\pp}{\mathcal P}
\newcommand{\AC}{\mathcal A \mathcal C}
\newcommand{\bAC}{\overline{\AC}}
\newcommand{\ppp}{\mathfrak P}
\newcommand{\N}{{\mathbb N}}
\newcommand{\LL}{\mathbb{L}}
\newcommand{\R}{{\mathbb R}}
\newcommand{\E}{\mathbb E}
\newcommand{\Prob}{\mathbb{P}}
\newcommand{\eps}{\varepsilon}

\newcommand{\Ss}{{\mathcal S}}
\newcommand{\Nn}{{\mathcal N}}

\newcommand{\ceil}[1]{\left \lceil #1 \right \rceil}
\newcommand{\floor}[1]{\left \lfloor #1 \right \rfloor}
\newcommand{\size}[1]{\left \vert #1 \right \vert}
\newcommand{\dist}{\mathrm{dist}}

\title{An alternative proof of the linearity of the size-Ramsey number of paths}

\author{Andrzej Dudek}
\address{Department of Mathematics, Western Michigan University, Kalamazoo, MI, USA}
\email{\tt andrzej.dudek@wmich.edu}
\thanks{The first author is supported in part by Simons Foundation Grant \#244712 and by a grant from the Faculty Research and Creative Activities Award (FRACAA), Western Michigan University.}

\author{Pawe\l{} Pra\l{}at}
\address{Department of Mathematics, Ryerson University, Toronto, ON, Canada}
\email{\tt pralat@ryerson.ca}
\thanks{The second author is supported in part by NSERC and Ryerson University}

\thanks{Work done during a visit to the Institut Mittag-Leffler (Djursholm, Sweden)}

\keywords{Ramsey theory, size Ramsey number, random graphs}
%\subjclass{05C80, 05C57, 68R10}

\begin{abstract}
The size-Ramsey number $\hat{r}(F)$ of a graph $F$ is the smallest integer $m$ such that there exists a graph $G$ on $m$ edges with the property that every colouring of the edges of $G$ with two colours yields a monochromatic copy of $F$. In 1983, Beck provided a beautiful argument that shows that $\hat{r}(P_n)$ is linear, solving a problem of Erd\H{o}s. In this note, we provide another proof of this fact that actually gives a better bound, namely, $\hat{r}(P_n) < 137n$ for $n$ sufficiently large. 
\end{abstract}

\maketitle

\section{Introduction}\label{sec:intro}

For given two finite graphs $F$ and $G$, we write $G \to F$ if for \emph{every} colouring of the edges of $G$ with two colours (say blue and red) we obtain a monochromatic copy of $F$ (that is, a copy that is either blue or red). The \emph{size-Ramsey number} of a graph $F$, introduced by Erd\H{o}s, Faudree, Rousseau and Schelp~\cite{ErdFauRouSch78} in 1978,  
is defined as follows:
$$
\hat{r}(F) = \min \{ |E(G)| : G \to F \}.
$$

In this note,  we consider the size-Ramsey number of the path $P_n$ on $n$ vertices. It is obvious that $\hat{r}(P_n) = \Omega(n)$ and that $\hat{r}(P_n) = O(n^2)$ (for example, $K_{2n}\to P_n$) but the exact behaviour of $\hat{r}(P_n)$ was not known for a long time. In fact, Erd\H{o}s~\cite{Erd} offered \$100 for a proof or disproof that 
$$
\hat{r}(P_n) / n \to \infty \quad \text{ and } \quad \hat{r}(P_n) / n^2 \to 0.
$$
The problem was solved by Beck~\cite{Beck} in 1983 who, quite surprisingly, showed that $\hat{r}(P_n) < 900 n$ for sufficiently large $n$. A variant of his proof was provided by Bollob\'{a}s~\cite{bol} and it gives $\hat{r}(P_n) < 720 n$ for sufficiently large $n$. It is worth mentioning that both of these bounds are not explicit constructions. Later Alon and Chung~\cite{AlonChung} gave an explicit construction of graphs $G$ on $O(n)$ vertices with $G\to P_n$.

Here we provide an alternative and elementary proof of the linearity of the size-Ramsey number of paths that gives a better bound. The proof relies on a simple observation, Lemma~\ref{lem:obs}, which may be applicable elsewhere.

\begin{theorem}\label{thm:main}
For $n$ sufficiently large, $\hat{r}(P_n) < 137n$.
\end{theorem}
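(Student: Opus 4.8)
The plan is to produce, by the probabilistic method, a single graph $G$ with fewer than $137n$ edges satisfying $G \to P_n$. I would take $G$ to be a random graph on $N = \ceil{an}$ vertices --- most naturally $G(N,p)$ with $p = b/n$, though a union of a constant number of random perfect matchings, or a random regular graph, may be more economical --- with the parameters tuned so that $G$ has fewer than $137n$ edges with probability $1-o(1)$ (a routine concentration estimate, once the expected number of edges has been made a constant multiple of $n$ with multiplier below $137$). Everything then reduces to showing that, for a suitable choice of the parameters, $G \to P_n$ holds with probability $1-o(1)$, and the number $137$ is simply whatever the optimisation of the constants below permits.

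To establish $G \to P_n$ I would fix an arbitrary red/blue colouring of $E(G)$ and search for a monochromatic $P_n$. The engine is Lemma~\ref{lem:obs}: a purely combinatorial sufficient condition for a graph to contain $P_n$, of the type ``expansion implies a long path'' --- roughly, that the graph be connected and that every vertex set which is not too large expand by about a factor of two, in the spirit of P\'osa's rotation lemma or a DFS argument. It then suffices to prove that with probability $1-o(1)$, \emph{every} two-colouring of $E(G)$ leaves one colour class containing a subgraph on at least $n$ vertices satisfying the hypothesis of Lemma~\ref{lem:obs}. Since a random graph with a linear number of edges necessarily has vertices of bounded (indeed zero) degree, I would first pass to the subgraph $G'$ obtained by iteratively deleting vertices of degree below a small constant; a routine branching-process estimate shows $G'$ still has $\Omega(n)$ vertices and retains the expansion one needs, and the rest of the argument runs inside $G'$.

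The crux --- and the step I expect to be the main obstacle --- is the handling of the colouring. One cannot simply invoke ``the majority colour class behaves well'': the adversary chooses which edges receive which colour and can make both colour classes separately disconnected and non-expanding. The way around this is that the random graph is \emph{robustly} expanding --- between any two disjoint linear-sized sets it contains not one edge but $\Omega(n)$ of them --- so that the bad event ``some colouring and some linear-sized set $W$ exhibit a failure of the Lemma~\ref{lem:obs} hypothesis in \emph{both} colours inside $W$'' decomposes into sub-events, indexed by the colouring, by $W$, and by a putative poorly-expanding configuration within $W$, each of probability $e^{-\Omega(n)}$. A union bound over all $2^{|E(G)|}$ colourings and over all such configurations then closes the proof, and it is affordable precisely because $|E(G)| = \Theta(n)$ while the individual failure probabilities are $e^{-cn}$ with $c$ large. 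Requiring the resulting exponent to be negative amounts to a small system of inequalities in the parameters, whose optimal feasible solution yields $\hat{r}(P_n) < 137n$; improving on Bollob\'{a}s's $720$ to $137$ should come from being economical here --- demanding expansion only of a carefully chosen family of sets rather than of all small sets, and letting the monochromatic path occupy only part of $V(G')$.
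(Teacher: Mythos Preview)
Your plan misreads Lemma~\ref{lem:obs} and, as a consequence, takes exactly the detour the paper is written to avoid. Lemma~\ref{lem:obs} is not an ``expansion implies a long path'' criterion to be verified inside a single colour class. It is a statement about a \emph{two-coloured} graph on $cn$ vertices with no monochromatic $P_n$: a DFS-type search produces, for \emph{each} colour, a pair of disjoint sets of size $n(c-1)/2$ with no edge of that colour between them. There is no expansion hypothesis to check, and the trimming of low-degree vertices you propose plays no role anywhere in the argument.

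The more serious gap is the union bound over the $2^{|E(G)|}$ colourings. That is essentially the device of Beck and Bollob\'as, and it is why their constants are $900$ and $720$: the $2^{\Theta(n)}$ factor forces the edge density up. The paper's point is to remove the colouring from the probabilistic step altogether. One applies Lemma~\ref{lem:obs} once for blue, obtaining $U,W$ with no blue edge across, and once for red, obtaining $U',W'$ with no red edge across. A short pigeonhole argument on the four intersections $U\cap U'$, $U\cap W'$, $W\cap U'$, $W\cap W'$ then locates two of them, say $X$ and $Y'$, each of size at least $n(c-3)/4$, with $X\subseteq U\cap U'$ and $Y'\subseteq W\cap W'$. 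Hence there is neither a blue nor a red edge between $X$ and $Y'$, i.e.\ $e(X,Y')=0$ in the \emph{uncoloured} graph $G$. The only probabilistic input is now that a.a.s.\ $G(cn,d/n)$ has an edge between every pair of disjoint sets of size $n(c-3)/4$ --- a single first-moment computation with no colouring in sight --- and optimising $c,d$ subject to this yields $c^2 d/2 < 137$. Your outline, as it stands, would reproduce a Beck-type bound rather than the $137$.
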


In order to show the result, similarly to Beck and Bollob\'{a}s, we are going to use binomial random graphs. The \emph{binomial random graph} $G(n,p)$ is the random graph $G$ on vertex set $[n]$ for which for every pair $\{i,j\} \in \binom{[n]}{2}$, $\{i,j\}$ appears independently as an edge in $G$ with probability~$p$. Note that $p=p(n)$ may, and usually does, tend to zero as $n$ tends to infinity. All asymptotics throughout are as $n \rightarrow \infty $. We say that a sequence of events $\mathcal{E}_n$ in a probability space holds \emph{asymptotically almost surely} (or \emph{a.a.s.}) if the probability that $\mathcal{E}_n$ holds tends to $1$ as $n$ goes to infinity. 
For simplicity, we do not round numbers that are supposed to be integers either up or down; this is justified since
these rounding errors are negligible to the asymptomatic calculations we will make.

\section{Proof of Theorem~\ref{thm:main}}

We start with the following elementary observation.\footnote{A similar result was independently obtained by Pokrovskiy~\cite{P}.}

\begin{lemma}\label{lem:obs}
Let $c>1$ be a real number and let $G = (V,E)$ be a graph on $cn$ vertices. Suppose that the edges of $G$ are coloured with the colours blue and red and there is no monochromatic~$P_n$. Then the following two properties hold:
\begin{enumerate}
\item [(i)] there exist two disjoint sets $U,W \subseteq V$ of size $n(c-1)/2$ such that there is no blue edge between $U$ and $W$,
\item [(ii)] there exist two disjoint sets $U',W' \subseteq V$ of size $n(c-1)/2$ such that there is no red edge between $U'$ and $W'$.
\end{enumerate}
\end{lemma}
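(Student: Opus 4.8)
The plan is to prove part (i) by a depth-first search (DFS) argument on the blue subgraph; part (ii) will follow from the identical argument applied to the red subgraph. Let $B$ denote the spanning subgraph of $G$ consisting of the blue edges. I would run DFS on $B$, restarting from an arbitrary unvisited vertex whenever the current search tree is exhausted, so that all of $V$ is eventually explored. At each step of the process I would partition $V$ into three parts: the set $S$ of vertices already fully processed and removed from the stack, the set $A$ of vertices currently on the stack, and the set $T$ of vertices not yet visited.

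The two facts I would extract from the DFS are standard. First, there is no blue edge between $S$ and $T$: when a vertex is popped from the stack, all of its neighbours in $B$ have already been visited, so a finished vertex cannot have an unvisited blue neighbour (and there are trivially no blue edges between distinct components, which handles the restarts). Second, the stack $A$ always induces a path of $B$ (consecutive stack elements are joined by tree edges), so since $G$ contains no blue $P_n$ we must have $|A| \le n-1$ throughout the whole search.

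Next I would track $|S|$ over the course of the search. It starts at $0$, ends at $|V| = cn$, and increases by exactly $1$ whenever a vertex is popped (and changes at no other time), so at some moment it equals $n(c-1)/2$ (an integer, by our convention of ignoring rounding; note $n(c-1)/2 < cn$ since $c > 1$). At that moment $|T| = cn - |S| - |A| \ge cn - n(c-1)/2 - (n-1) > n(c-1)/2$. Taking $U := S$ and letting $W$ be any $n(c-1)/2$ vertices of $T$ then yields two disjoint sets of the required size with no blue edge between them, which proves (i); applying the same argument to the red subgraph proves (ii).

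I do not anticipate a serious obstacle: the only points requiring care are the bookkeeping for a possibly disconnected $B$ (ensuring the ``no edge between $S$ and $T$'' invariant survives the restarts) and checking that $|S|$ really does attain the target value exactly, both of which are routine. The one genuinely load-bearing idea is the observation that in a DFS the finished set and the untouched set are non-adjacent while the active stack is a path, so that forbidding a long path forces the existence of a large non-adjacent pair.
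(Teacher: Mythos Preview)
Your proposal is correct and is essentially the same argument as the paper's: the paper's greedy path-building procedure with backtracking \emph{is} a depth-first search on the blue subgraph, with the paper's $P$, $U$, $W$ playing exactly the roles of your stack $A$, unvisited set $T$, and finished set $S$. The only difference is presentational---the paper narrates the algorithm without naming it DFS and argues a bit more loosely that both $U$ and $W$ eventually reach size $n(c-1)/2$, whereas you pin down the precise moment $|S|=n(c-1)/2$ and compute $|T|$ explicitly.
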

\begin{proof}
We perform the following algorithm on $G$ and construct a blue path $P$. Let $v_1$ be an arbitrary vertex of $G$, let $P=(v_1)$, $U = V \setminus \{v_1\}$, and $W = \emptyset$. We investigate all edges from $v_1$ to $U$ searching for a blue edge. If such an edge is found (say from $v_1$ to $v_2$), we extend the blue path as $P=(v_1,v_2)$ and remove $v_2$ from $U$. We continue extending the blue path $P$ this way for as long as possible. Since there is no monochromatic $P_n$, we must reach the point of the process in which $P$ cannot be extended, that is, there is a blue path from $v_1$ to $v_k$ ($k < n$) and there is no blue edge from $v_k$ to $U$. This time, $v_k$ is moved to $W$ and we try to continue extending the path from $v_{k-1}$, reaching another critical point in which another vertex will be moved to $W$, etc. If $P$ is reduced to a single vertex $v_1$ and no blue edge to $U$ is found, we move $v_1$ to $W$ and simply re-start the process from another vertex from $U$, again arbitrarily chosen. 

An obvious but important observation is that during this algorithm there is never a blue edge between $U$ and $W$. Moreover, in each step of the process, the size of $U$ decreases by 1 or the size of $W$ increases by 1. Finally, since there is no monochromatic $P_n$, the number of vertices of the blue path $P$ is always smaller than $n$. Hence, at some point of the process both $U$ and $W$ must have size at least $n(c-1)/2$. Part (i) now holds after removing some vertices from $U$ or $W$, if needed, so that both sets have sizes precisely $n(c-1)/2$. 

Part (ii) can be proved by a symmetric argument; this time the algorithm tries to build a red path. The proof is finished.
\end{proof}

Now, we prove the following straightforward properties of random graphs. For every two disjoint sets $S$ and $T$, $e(S,T)$ denotes the number of edges between $S$ and $T$.

\begin{lemma}\label{lem:random1}
Let $c=7.29$ and $d=5.14$, and consider $G = (V,E) \in G(cn,d/n)$. Then, the following two properties hold a.a.s.:
\begin{enumerate}
\item [(i)] $|E(G)| = (1+o(1)) n c^2 d / 2 < 137 n$,
\item [(ii)] for every two disjoint sets of vertices $S$ and $T$ such that $|S| = |T| = n(c-3)/4$ we have $e(S,T) \neq 0$.
\end{enumerate}
\end{lemma}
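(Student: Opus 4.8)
The plan is to prove the two statements independently, each with a short moment computation.

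For part (i), observe that $|E(G)|$ is a binomial random variable with parameters $\binom{cn}{2}$ and $d/n$, so its expectation equals $\binom{cn}{2}\cdot\frac{d}{n} = (1+o(1))\,\frac{c^2 d}{2}\,n$, and its variance is at most its mean. Hence Chebyshev's inequality (or a Chernoff bound, if exponential concentration is wanted) gives $|E(G)| = (1+o(1))\,\frac{c^2 d}{2}\,n$ a.a.s. The only remaining point is the arithmetic check $\frac{c^2 d}{2} = \frac{(7.29)^2\cdot 5.14}{2} = 136.58\ldots < 137$, which shows $|E(G)| < 137n$ for all sufficiently large $n$.

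For part (ii), I would run a first-moment (union bound) argument over all pairs of disjoint sets of size $s := n(c-3)/4$. For one fixed such pair $(S,T)$ there are $|S|\,|T| = s^2$ independent potential edges, so $\Prob(e(S,T)=0) = (1-d/n)^{s^2} \le \exp(-d s^2/n)$. The number of such pairs is at most the multinomial coefficient $\binom{cn}{s,\,s,\,cn-2s}$, which is at most $\exp(cn\,H)$ with $H := -2\alpha\ln\alpha - (1-2\alpha)\ln(1-2\alpha)$ evaluated at $\alpha := s/(cn) = (c-3)/(4c)$ (here $0<\alpha<1/2$ since $c>3$). Writing $X$ for the number of pairs with $e(S,T)=0$, this gives $\E X \le \exp\!\big(n\,[\,cH - d(c-3)^2/16\,]\big)$, and it suffices to verify that the bracketed constant is negative. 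For $c=7.29$, $d=5.14$ a direct computation gives $cH \approx 5.904$ and $d(c-3)^2/16 \approx 5.912$, so $\E X \to 0$ and, by Markov's inequality, a.a.s. $X = 0$, which is exactly (ii).

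The hard part is that the inequality $cH < d(c-3)^2/16$ underlying (ii) is genuinely tight — the gap in the exponent is only about $0.008\,n$, roughly a tenth of a percent of the leading term — so the counting step must use the sharp entropy bound $\binom{m}{k}\le e^{mH(k/m)}$ (applied twice, equivalently the Stirling estimate for the multinomial coefficient). The looser estimates $\binom{cn}{s}^2 \le (ecn/s)^{2s}$, and even $\binom{cn}{s}\binom{cn-s}{s}\le (ecn/s)^s(e(cn-s)/s)^s$, are \emph{not} strong enough here. This tightness is precisely what pins down the constants: one minimizes $c^2 d$ (the target in (i)) subject to the constraint $cH \le d(c-3)^2/16$ forced by (ii), and $c=7.29$, $d=5.14$ is essentially the optimum. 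Part (i) is then completely routine.
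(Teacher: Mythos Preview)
Your proof is correct and follows essentially the same route as the paper: Chernoff (or Chebyshev) for part~(i), and a first-moment/union-bound computation for part~(ii) using the sharp Stirling/entropy estimate on the multinomial $\binom{cn}{s,\,s,\,cn-2s}$, arriving at the same exponent (your $cH - d(c-3)^2/16$ is algebraically identical to the paper's $f(c,d)$, with the same numerical gap of roughly $-0.008$). Your remarks on the tightness of the inequality and on the optimization of $c^2d/2$ subject to the constraint also mirror the paper's parenthetical comment on the choice of $c$ and $d$.
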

\begin{proof}
Part (i) is obvious. The expected number of edges in $G$ is ${cn \choose 2} \frac {d}{n} = (1+o(1)) n c^2 d / 2$, and the concentration around the expectation follows immediately from Chernoff's bound. 

For part (ii), let $X$ be the number of pairs of disjoint sets $S$ and $T$ of desired size such that $e(S,T) = 0$. Putting $\alpha = \alpha(c) = (c-3)/4$ for simplicity, we get
\begin{eqnarray*}
\E[X] &=& {cn \choose \alpha n} {(c- \alpha)n \choose \alpha n} \left( 1- \frac{d}{n} \right)^{\alpha n\cdot \alpha n} \\
& \le & \frac {(cn)!}{(\alpha n)! (\alpha n)! ((c-2\alpha)n)!} \exp \big( -d \alpha^2 n \big).
\end{eqnarray*}
Using Stirling's formula ($x! = (1+o(1)) \sqrt{2\pi x} (x/e)^x$) we get that $\E[X] \le \exp( f(c,d) n)$, where
$$
f(c,d) = c \ln c - 2 \alpha \ln \alpha - (c-2\alpha) \ln (c-2\alpha) - d \alpha^2.
$$
Putting numerical values of $c$ and $d$ into the formula, we get $f(c,d) < -0.008$ and so $\E[X] \to 0$ as $n \to \infty$. 
(The values of $c$ and $d$ were chosen so as to minimize $c^2 d / 2$ under the condition $f(c,d) < 0$.) Now part (ii) holds by Markov's inequality. 
\end{proof}

Now, we are ready to prove the main result.

\begin{proof}[Proof of Theorem~\ref{thm:main}]
Let $c=7.29$ and $d=5.14$, and consider $G = (V,E) \in G(cn,d/n)$. We show that a.a.s.\ $G \to P_n$ which will finish the proof by Lemma~\ref{lem:random1}(i).

For a contradiction, suppose that $G \not\to P_n$. Thus, there is a blue-red colouring of $E$ with no monochromatic~$P_n$. It follows (deterministically) from Lemma~\ref{lem:obs}(i) that $V$ can be partitioned into three sets $P, U, W$ such that $|P|=n$, $|U|=|W|=n(c-1)/2$, and there is no blue edge between $U$ and $W$. Similarly, by Lemma~\ref{lem:obs}(ii), $V$ can be partitioned into three sets $P', U', W'$ such that $|P'|=n$, $|U'|=|W'|=n(c-1)/2$, and there is no red edge between $U'$ and $W'$.

Now, consider $X = U \cap U', Y = U \cap W', X' = W \cap U', Y' = W \cap W'$
and let $x =|X|, y=|Y|, x'=|X'|, y'=|Y'|$ be their sizes, respectively. Observe that
\begin{equation}\label{eq:x+y}
x+y = |U \cap (U' \cup W')| = |U \setminus P'| \ge |U| - |P'| = n (c-3)/2.
\end{equation}
Similarly, one can show that $x'+y' \ge n (c-3)/2$, $x+x' \ge n (c-3)/2$, and that $y+y' \ge n (c-3)/2$. We say that a set is \emph{large} if its size is at least $n(c-3)/4$; otherwise, we say that it is \emph{small}. We need the following straightforward observation.

\bigskip

\noindent \emph{Claim.} Either both $X$ and $Y'$ are large or both $Y$ and $X'$ are large.

\smallskip

\noindent (In fact one can easily show that the constant $(c-3)/4$ in the definition of being large is optimal.)

\smallskip

\noindent \emph{Proof of the claim.} For a contradiction, suppose that at least one of $X,Y'$ is small \emph{and} at least one of $Y,X'$ is small, say, $X$ and $Y$ are small. But this implies that $x+y < n(c-3)/4 + n(c-3)/4 = n(c-3)/2$, which contradicts~(\ref{eq:x+y}). The remaining three cases are symmetric, and so the claim holds. 

\bigskip

Now, let us come back to the proof. Without loss of generality, we may assume that $X=U \cap U'$ and $Y'=W \cap W'$ are large. Since $X \subseteq U$ and $Y' \subseteq W$, there is no blue edge between $X$ and $Y'$. Similarly, one can argue that there is no red edge between $X$ and $Y'$, and so $e(X,Y')=0$. On the other hand, Lemma~\ref{lem:random1}(ii) implies that a.a.s.\ $e(X,Y') \neq 0$, reaching the desired contradiction. It follows that a.a.s.\ $G \to P_n$ which finishes the proof.
\end{proof}

\section{Remarks}

In this note we showed that $\hat{r}(P_n) < 137n$. On the other hand, the best known lower bound, $\hat{r}(P_n) \ge (1+\sqrt{2})n-2$, was given by Bollob\'as~\cite{bol2} who improved the previous result of Beck~\cite{Beck2} that shows that $\hat{r}(P_n) \ge \frac{9}{4}n$. Decreasing the gap between the lower and upper bounds might be of some interest. 
One approach to improving the upper bound could be to deal with non-symmetric cases in our claim or to use random $d$-regular graphs instead of binomial graphs.

Another related problem deals with longest monochromatic paths in $G(n,p)$. Observe that it follows from the proof of Theorem~\ref{thm:main} that for every $\omega = \omega(n)$ tending to infinity arbitrarily slowly together with $n$ we have that a.a.s.\ any 2-colouring of the edges of $G(n,\omega/n)$ yields a monochromatic path of length $\frac{(1-\varepsilon)}{3}n$ for an arbitrarily small $\varepsilon>0$. On the other hand, a simple construction of Gerencs{\'e}r and Gy{\'a}rf{\'a}s~\cite{Gyarfas} shows that such path cannot be longer than $\frac{2}{3}n$. We conjecture that actually $(1+o(1))\frac{2}{3}n$ is the right answer for random graphs with average degree tending to infinity.\footnote{The conjecture was recently proved by Letzter~\cite{L}.}

\section{Acknowledgment}
We would like to thank to the referees and editors for their valuable comments and suggestions.

\end{document}